\newtheorem{theorem}{Theorem}[section]
\newtheorem*{theorem*}{Theorem B} 
\newtheorem{lemma}[theorem]{Lemma}
\newtheorem*{definition*}{Definition}
\newtheorem*{remark*}{Remark}
\newtheorem*{observation*}{Observation}
\newtheorem*{assumption*}{Assumption}
\newtheorem*{question*}{Question}
\newcommand{\N}{\mathbb{N}}
\newcommand{\C}{\mathbb{C}}
\newcommand{\E}{\mathbb{E}}
\newcommand{\Cov}{\mathrm{Cov}}
\begin{document}

\title[Extension of positive definite kernels]{On a result of Bo{\.z}ejko on extension of positive definite kernels}

\author
{Yanqi Qiu}
\address
{Yanqi QIU: Institute of Mathematics and Hua Loo-Keng Key Laboratory of Mathematics, AMSS, Chinese Academy of Sciences, Beijing 100190, China.
}
\email{yanqi.qiu@hotmail.com; yanqi.qiu@amss.ac.cn}

\thanks{This research is supported by grants NSFC Y7116335K1,  NSFC 11801547 and NSFC 11688101 of National Natural Science Foundation of China.}

\begin{abstract}
A conceptual proof of the result of Bo{\.z}ejko on extension of positive definite kernels is given. 
\end{abstract}

\subjclass[2010]{Primary 60G17, 43A35}
\keywords{Positive definite kernel, Gaussian processes}

\maketitle

\setcounter{equation}{0}

\section{Introduction}

Let  $\C$ be the field of complex numbers.  In this note, by {\it positive definite}, we mean {\it non-negative definite}. Recall that a map $K: \Sigma \times \Sigma \rightarrow \C$ is called a positive definite kernel, if for each $k\in \N$, each choice of elements $\sigma_1, \cdots, \sigma_k \in \Sigma$, the square-matrix $[ K(\sigma_i, \sigma_j)]_{1\le i, j \le k}$ is  positive definite.

A beautiful result of Bo\.{z}ejko \cite{Bozejko-89} on the extension of positive definite kernels on union of two sets is as follows.  Let $K_i: \Sigma_i \times \Sigma_i \rightarrow \C$ be a kernel on a set $\Sigma_i$ ($i = 1, 2$). Assume that the intersection $\Sigma_1 \cap \Sigma_2 = \{x_0\}$ is a singleton and $K_1(x_0, x_0) = K_2(x_0, x_0)  = 1$. The {\it Markov product} of $K_1$ and $K_2$, denoted by $K_1*_{x_0} K_2$,  is a kernel on the union $\Sigma_1 \cup \Sigma_2$, defined by 
\begin{itemize}
\item $[K_1*_{x_0} K_2]\Big|_{\Sigma_i\times \Sigma_i} = K_i ( i = 1, 2);$
\item (Markov property) For $\sigma_i \in \Sigma_i(i = 1, 2)$, 
\[
[K_1*_{x_0} K_2](\sigma_1, \sigma_2)= K_1(\sigma_1, x_0) K_2(x_0, \sigma_2)
\]
and 
\[
 [K_1*_{x_0} K_2](\sigma_2, \sigma_1)= \overline{[K_1*_{x_0} K_2](\sigma_1, \sigma_2)}.
\]
\end{itemize}

\begin{theorem}[{Bo\.{z}ejko \cite[Theorem 4.1]{Bozejko-89}}]\label{thm_Bozej}
Let $\Sigma_1, \Sigma_2$ be two sets such that the intersection $\Sigma_1 \cap \Sigma_2 = \{x_0\}$ is a singleton. 
Let  $K_1, K_2$ be two positive definite kernels on $\Sigma_1, \Sigma_2$  respectively. Assume that 
\[
K_i (\sigma_i, \sigma_i) = 1 \quad \text{for all $\sigma_i \in \Sigma_i \, (i = 1, 2)$.}
\]
Then the Markov product $K_1*_{x_0}K_2$ of $K_1$ and $K_2$ is also a positive definite kernel. 
\end{theorem}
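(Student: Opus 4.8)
The plan is to exhibit the Markov product $K_1 *_{x_0} K_2$ as the Gram matrix of an explicit family of unit vectors in a single Hilbert space; since every Gram matrix is positive definite, this settles the theorem at once. The conceptual input is the standard correspondence (Kolmogorov / GNS / reproducing kernel) between a normalized positive definite kernel and a family of unit vectors: because $K_i$ is positive definite with $K_i(\sigma,\sigma)=1$, there is a Hilbert space $\HH_i$ and a map $\Sigma_i \to \HH_i$, written $\sigma \mapsto u_\sigma$ for $i=1$ and $\tau \mapsto v_\tau$ for $i=2$, with $\|u_\sigma\| = \|v_\tau\| = 1$ and
\[
K_1(\sigma, \sigma') = \langle u_\sigma, u_{\sigma'}\rangle, \qquad K_2(\tau, \tau') = \langle v_\tau, v_{\tau'}\rangle .
\]
In the probabilistic language of the abstract these are simply the $L^2$-vectors of normalized (complex) Gaussian processes with covariances $K_1$ and $K_2$.

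The amalgamation step is to pass to the tensor product $\HH = \HH_1 \otimes \HH_2$ and to define a single map $w \colon \Sigma_1 \cup \Sigma_2 \to \HH$ by
\[
w_\sigma = u_\sigma \otimes v_{x_0} \ (\sigma \in \Sigma_1), \qquad w_\tau = u_{x_0} \otimes v_\tau \ (\tau \in \Sigma_2).
\]
The only point needing attention for this to be well defined is consistency on the overlap $\Sigma_1 \cap \Sigma_2 = \{x_0\}$: both formulas assign $w_{x_0} = u_{x_0} \otimes v_{x_0}$, so there is no conflict, and each $w_a$ is again a unit vector. Probabilistically, $w$ records the product process $X_\sigma Y_{x_0}$ on $\Sigma_1$ and $X_{x_0} Y_\tau$ on $\Sigma_2$, where $X, Y$ are independent realizations of $K_1, K_2$; independence is precisely what produces the tensor, i.e. $L^2(\Omega_1)\otimes L^2(\Omega_2)$, structure.

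It then remains to verify that $\langle w_a, w_b \rangle = [K_1 *_{x_0} K_2](a,b)$ for all $a,b$, which splits into three routine cases via $\langle \xi_1 \otimes \eta_1, \xi_2 \otimes \eta_2\rangle = \langle \xi_1, \xi_2\rangle \langle \eta_1, \eta_2\rangle$ and the normalization $\|u_{x_0}\| = \|v_{x_0}\| = 1$: for $a,b \in \Sigma_1$ the second factor contributes $\langle v_{x_0}, v_{x_0}\rangle = 1$ and one recovers $K_1$; symmetrically one recovers $K_2$ for $a,b \in \Sigma_2$; and for $a = \sigma \in \Sigma_1$, $b = \tau \in \Sigma_2$ the inner product factors as $\langle u_\sigma, u_{x_0}\rangle \langle v_{x_0}, v_\tau \rangle = K_1(\sigma, x_0)K_2(x_0, \tau)$, which is exactly the Markov property. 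Hence $K_1 *_{x_0} K_2$ is a Gram matrix and therefore positive definite. I do not expect a genuine obstacle: the construction is the whole content, and the only thing to watch is well-definedness at $x_0$. It is worth noting that the proof uses the hypothesis only through $K_1(x_0,x_0) = K_2(x_0,x_0) = 1$, which is what makes the distinguished vectors $u_{x_0}, v_{x_0}$ unit vectors so that tensoring against them neither rescales the two diagonal blocks nor breaks consistency on the overlap; the normalization of $K_i$ on the rest of the diagonal is not actually needed for this argument.
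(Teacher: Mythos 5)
Your proof is correct, and it reaches the conclusion by a genuinely different (though philosophically parallel) route from the paper. The paper first proves a Schur-complement lemma (its Lemma 2.1) and uses it to split each kernel into a ``mean'' part $K_i(\sigma,x_0)$ and a ``covariance'' part $K_i(\sigma,\tau)-K_i(\sigma,x_0)K_i(x_0,\tau)$; this lets it build a Gaussian process $(X^{(i)}_\sigma)$ with $\E(X^{(i)}_\sigma)=K_i(\sigma,x_0)$ whose value at $x_0$ is the \emph{deterministic} constant $1$, after which two independent such processes glue directly over $x_0$ and the cross terms factor because $\E(X^{(1)}_{\sigma_1}\overline{X^{(2)}_{\sigma_2}})=\E(X^{(1)}_{\sigma_1})\E(\overline{X^{(2)}_{\sigma_2}})$. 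You instead take the abstract Gram-vector (Kolmogorov/GNS) representation of each kernel and amalgamate in the tensor product $\mathcal{H}_1\otimes\mathcal{H}_2$ by pinning each family to the unit vector over $x_0$, so the value over $x_0$ need not be deterministic --- only of unit norm. Your route avoids the Schur-complement reduction entirely and needs no finiteness reduction or existence theorem for Gaussian processes; what the paper's construction buys in exchange is an explicit single family of random variables $(Y_\sigma)$ with $Y_{x_0}\equiv 1$ realizing the Markov product as a literal second-moment kernel, which is the ``conceptual'' picture it is advertising. Your closing observation that the argument uses the diagonal normalization only at $x_0$ is exactly right: your proof in fact establishes the paper's strengthened Theorem 1.2, not just the statement above. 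The only points to keep an eye on are routine: fixing one sesquilinearity convention so that $K(\sigma,\tau)=\langle u_\sigma,u_\tau\rangle$ consistently yields $K_1(\sigma,x_0)K_2(x_0,\tau)$ (and its conjugate in the transposed position) in the cross case, and noting that positive definiteness of a Gram matrix is the identity $\sum_{a,b}c_a\overline{c_b}\langle w_a,w_b\rangle=\bigl\|\sum_a c_a w_a\bigr\|^2\ge 0$.
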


 A slight improvement of Theorem \ref{thm_Bozej} is the following 

\begin{theorem}\label{thm_improve}
Let $\Sigma_1, \Sigma_2$ be two sets such that the intersection $\Sigma_1 \cap \Sigma_2 = \{x_0\}$ is a singleton. 
Let  $K_1, K_2$ be two positive definite kernels on $\Sigma_1, \Sigma_2$  respectively. Assume that 
\[
K_1 (x_0, x_0) = K_2(x_0, x_0) =  1. 
\]
Then the Markov product $K_1*_{x_0}K_2$ of $K_1$ and $K_2$ is also a positive definite kernel. 
\end{theorem}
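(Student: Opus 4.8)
The plan is to realize positive definiteness through the standard \emph{feature map} (Kolmogorov factorization) picture: a kernel $K$ on a set $\Sigma$ is positive definite if and only if there exist a complex Hilbert space $\HH$ and a map $\phi\colon \Sigma \to \HH$ with $K(\sigma,\tau) = \langle \phi(\sigma), \phi(\tau)\rangle$ for all $\sigma,\tau$. One direction is the Gram-matrix computation; the converse is obtained by completing the span of the evaluation functionals, i.e.\ the reproducing-kernel Hilbert space construction. The strategy is then to build such a feature map for $K_1 *_{x_0} K_2$ out of feature maps for $K_1$ and $K_2$ by gluing the two ambient Hilbert spaces along a single common unit vector.

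First I would fix feature maps $\phi_i\colon \Sigma_i \to \HH_i$ with $K_i(\sigma,\tau) = \langle \phi_i(\sigma), \phi_i(\tau)\rangle_{\HH_i}$, where the inner product is linear in the first variable. The hypothesis $K_i(x_0,x_0)=1$ says precisely that $e_i := \phi_i(x_0)$ is a \emph{unit} vector in $\HH_i$; this is the only place the normalization is used, and it is exactly this one-dimensionality of the ``overlap direction'' that lets the argument go through with no condition on the other diagonal entries. Write $P_i$ for the orthogonal projection of $\HH_i$ onto $e_i^{\perp}$, so that $\phi_i(\sigma) = \langle \phi_i(\sigma), e_i\rangle\, e_i + P_i\phi_i(\sigma)$ is the decomposition along $\C e_i \oplus e_i^{\perp}$.

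Next I would form the glued space $\HH := \C \oplus e_1^{\perp} \oplus e_2^{\perp}$ and define $\phi\colon \Sigma_1\cup\Sigma_2 \to \HH$ by
\[
\phi(\sigma_1) = \langle \phi_1(\sigma_1), e_1\rangle \oplus P_1\phi_1(\sigma_1)\oplus 0, \qquad
\phi(\sigma_2) = \langle \phi_2(\sigma_2), e_2\rangle \oplus 0 \oplus P_2\phi_2(\sigma_2),
\]
for $\sigma_i \in \Sigma_i$. This is well defined at the single common point $x_0$, since both formulas return $1\oplus 0\oplus 0$ (because $\phi_i(x_0)=e_i$). It then remains to verify $[K_1 *_{x_0} K_2](\sigma,\tau) = \langle \phi(\sigma),\phi(\tau)\rangle$ in the three cases. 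For $\sigma,\tau$ in the same $\Sigma_i$, using $\|e_i\|=1$ one recovers $\langle \phi_i(\sigma),\phi_i(\tau)\rangle = K_i(\sigma,\tau)$ from the Pythagorean split. For $\sigma_1\in\Sigma_1$ and $\sigma_2\in\Sigma_2$, the two orthogonal-complement components sit in mutually orthogonal summands of $\HH$ and drop out, leaving $\langle \phi(\sigma_1),\phi(\sigma_2)\rangle = \langle \phi_1(\sigma_1),e_1\rangle\,\overline{\langle \phi_2(\sigma_2),e_2\rangle} = K_1(\sigma_1,x_0)K_2(x_0,\sigma_2)$, which is exactly the prescribed Markov value. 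Thus $\phi$ is a feature map for $K_1*_{x_0}K_2$, and the kernel is positive definite.

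The substantive point — and the only genuine obstacle — is conceptual rather than computational: one must see that the Markov property is \emph{forcing} the orthogonal complements $e_1^{\perp}$ and $e_2^{\perp}$ to be mutually orthogonal while the two copies of the overlap direction $\C e_i$ are identified, which is possible precisely because that direction is one-dimensional (the unit-vector condition). Equivalently, in the Gaussian-process language of the keywords, if $K_i$ is the covariance of a process $(X_\sigma)_{\sigma\in\Sigma_i}$ normalized so that $\Var(X_{x_0})=1$, the construction is the statement that the two families can be coupled to be conditionally independent given $X_{x_0}$, the Markov product being the resulting joint covariance. Once the gluing is set up correctly, all three verifications are one-line Pythagoras computations.
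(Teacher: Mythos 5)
Your proof is correct, and it takes a route that is structurally parallel to the paper's but cast in a different, arguably more elementary language. The paper realizes each $K_i$ probabilistically: by a Schur-complement lemma it builds a Gaussian process $(X^{(i)}_\sigma)$ with $\E(X^{(i)}_\sigma)=K_i(\sigma,x_0)$, $\Cov(X^{(i)}_\sigma,X^{(i)}_\tau)=K_i(\sigma,\tau)-K_i(\sigma,x_0)K_i(x_0,\tau)$ and $X^{(i)}_{x_0}\equiv 1$, then takes the two processes independent and checks that $\E(Y_\sigma\overline{Y_\tau})$ reproduces the Markov product. Your construction is the deterministic Hilbert-space shadow of exactly this: the coefficient $\langle\phi_i(\sigma),e_i\rangle=K_i(\sigma,x_0)$ plays the role of the mean, the component $P_i\phi_i(\sigma)\in e_i^{\perp}$ plays the role of the centered Gaussian part, and placing $e_1^{\perp}$ and $e_2^{\perp}$ in orthogonal summands of $\C\oplus e_1^{\perp}\oplus e_2^{\perp}$ is the analogue of taking the two processes independent while identifying the two copies of the constant $1$. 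What your version buys is the elimination of all probabilistic machinery: you need only the Kolmogorov/RKHS factorization, you avoid the paper's reduction to finite $\Sigma_i$ (the feature map exists for arbitrary index sets, whereas constructing the Gaussian process directly invites a finiteness or Kolmogorov-extension step), and the three verifications are indeed one-line Pythagoras computations. What the paper's version buys is the probabilistic interpretation you mention at the end — the Markov product as the covariance of a coupling that is independent across the two blocks given the value at $x_0$ — which is the ``conceptual'' content advertised in the title. One small point worth making explicit in a final write-up: the identity $\overline{\langle\phi_2(\sigma_2),e_2\rangle}=K_2(x_0,\sigma_2)$ uses the Hermitian symmetry $K_2(x_0,\sigma_2)=\overline{K_2(\sigma_2,x_0)}$, which holds because $K_2$ is positive definite (the paper makes the analogous remark when applying its Lemma 2.1).
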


The main purpose of this note is to give a conceptual  proof of Theorem \ref{thm_improve}. More precisely,   using basic results on Gaussian processes,  under the assumptions of Theorem \ref{thm_improve}, we will construct a family of random variables $(Y_\sigma)_{\sigma\in \Sigma_1 \cup \Sigma_2}$, not necessarily Gaussian, such that 
\[
[K_1*_{x_0} K_2 ] (\sigma, \tau) =  \E\big(Y_{\sigma} \overline{Y_{\tau}}\big), \quad \forall \sigma, \tau \in \Sigma_1 \cup \Sigma_2.
\]
The above representation of the kernel $K_1*_{x_0} K_2$ clearly implies that it is positive definite. 

\section{Conceptual proof of Theorem \ref{thm_improve}}
The following elementary lemmas will be  our main ingredients. 

\begin{lemma}\label{lem_pos}
Let $A$ be a positive definite $n\times n$ matrix. Then for a row vector $\alpha \in \C^n$, the matrix  
\[
T(\alpha) = \left[
\begin{array}{cc}
1 & \alpha
\\
\alpha^* & A
\end{array}
\right]
\]
is positive definite if and only if $A - \alpha^* \alpha$ is positive definite. 
\end{lemma}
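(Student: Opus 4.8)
The plan is to recognize $A-\alpha^*\alpha$ as the Schur complement of the scalar $(1,1)$-entry of $T(\alpha)$, and to prove the equivalence by exhibiting an explicit congruence that block-diagonalizes $T(\alpha)$. Writing $I_n$ for the $n\times n$ identity and setting
\[
L=\begin{bmatrix} 1 & 0 \\ \alpha^* & I_n \end{bmatrix},
\]
I would verify by direct block multiplication the identity
\[
T(\alpha)=L\begin{bmatrix} 1 & 0 \\ 0 & A-\alpha^*\alpha\end{bmatrix}L^*.
\]
This is the single computational step; checking it only requires expanding the product and using $\alpha^*\alpha+(A-\alpha^*\alpha)=A$.

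Next I would note that $L$ is lower triangular with unit diagonal, hence invertible, so the displayed identity exhibits $T(\alpha)$ as a congruence of the block-diagonal matrix $D:=\mathrm{diag}(1,\,A-\alpha^*\alpha)$. Since congruence by an invertible matrix preserves positive-definiteness --- for any $\xi$ one has $\xi^*T(\alpha)\xi=(L^*\xi)^*D(L^*\xi)$ and $\xi\mapsto L^*\xi$ is a bijection --- the matrix $T(\alpha)$ is positive definite if and only if $D$ is. Finally $D$, being block diagonal, is positive definite exactly when each diagonal block is, and the scalar block $1$ is automatically positive; this reduces the condition to positive-definiteness of $A-\alpha^*\alpha$, which is the claim.

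An equivalent route that avoids naming the congruence matrix is to complete the square directly in the quadratic form: for $c\in\C$ and a column vector $v\in\C^n$ one finds
\[
\begin{bmatrix}\bar c & v^*\end{bmatrix}T(\alpha)\begin{bmatrix} c \\ v\end{bmatrix}=|c+\alpha v|^2+v^*(A-\alpha^*\alpha)v,
\]
using that the scalar $\alpha v$ satisfies $|\alpha v|^2=v^*\alpha^*\alpha v$. The ``if'' direction is then immediate (split according to whether $v=0$), and for ``only if'' one substitutes $c=-\alpha v$ to isolate $v^*(A-\alpha^*\alpha)v$ and conclude it is strictly positive for every nonzero $v$.

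I do not expect a genuine obstacle here, as this is essentially the classical Schur-complement criterion. The only points to watch are the dimension bookkeeping --- that $\alpha$ is a \emph{row} vector, so that $\alpha^*\alpha$ is the rank-one $n\times n$ matrix and not the scalar $\alpha\alpha^*$ --- and the correct treatment of the degenerate case $v=0$ in the forward implication.
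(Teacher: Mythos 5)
Your first argument is exactly the paper's proof: the paper writes the same Schur-complement congruence in the form $L^{-1}\,T(\alpha)\,(L^{*})^{-1}=\mathrm{diag}(1,\,A-\alpha^{*}\alpha)$, which is your identity $T(\alpha)=LDL^{*}$ read in the other direction, and then invokes invariance of positive definiteness under congruence. The proposal is correct (and note the argument works equally well under the paper's convention that ``positive definite'' means non-negative definite, so the remarks about strict positivity in your alternative route are not needed).
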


\begin{proof}
Let $I_n$ denote the $n\times n$ identity matrix.  Clearly, $T(\alpha)$ is positive definite if and only if 
\[
 \left[
\begin{array}{cc}
1 & 0
\\
- \alpha^* & I_n
\end{array}
\right] \cdot T(\alpha) \cdot
\left[
\begin{array}{cc}
1 & -\alpha
\\
0 & I_n
\end{array}
\right] =  \left[
\begin{array}{cc}
1 & 0
\\
0 & A-\alpha^* \alpha
\end{array}
\right] 
\]
is positive definite. Therefore, $T(\alpha)$ is positive definite if and only if $A - \alpha^* \alpha$ is positive definite. 
\end{proof}

\begin{lemma}\label{lem_gauss}
Let  $K$ be a positive definite kernel on a finite set $S$. Assume that there exists $s_0\in S$ such that $K(s_0, s_0) = 1$. Then there exists a Gaussian process $(X_s)_{s \in S\setminus \{s_0\}}$ such that, by setting $X_{s_0} \equiv 1$, we have 
\begin{align}\label{K-X}
K(s, t) = \E (X_s \overline{X}_t), \quad  \forall s, t\in S. 
\end{align}
\end{lemma}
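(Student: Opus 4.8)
The plan is to read off the mean and the covariance of the desired process directly from the block structure of the Gram matrix of $K$, and then to realize these prescribed moments by an explicit complex Gaussian vector. First I would enumerate $S \setminus \{s_0\} = \{s_1, \dots, s_n\}$ and write the matrix $[K(s,t)]_{s,t \in S}$, with $s_0$ listed first, in the block form $T(\alpha) = \left[\begin{smallmatrix} 1 & \alpha \\ \alpha^* & A \end{smallmatrix}\right]$, where $\alpha = (K(s_0, s_j))_{j=1}^n$ is a row vector and $A = (K(s_i, s_j))_{i,j=1}^n$. Since $K$ is Hermitian and positive definite, $T(\alpha)$ is a positive definite matrix, so Lemma \ref{lem_pos} applies and shows that $C := A - \alpha^* \alpha$ is positive definite (in the non-negative sense of this note).

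Next I would let the moment requirement \eqref{K-X} dictate the parameters of the process. With $X_{s_0} \equiv 1$, the identity \eqref{K-X} at a pair $(s_i, s_0)$ reads $\E(X_{s_i}) = K(s_i, s_0)$, which forces the mean vector to be $m = \alpha^*$, that is $\E(X_{s_j}) = K(s_j, s_0) = \overline{\alpha_j}$. For a pair $(s_i, s_j)$ with $i, j \ge 1$, decomposing the second moment as $\E(X_{s_i} \overline{X}_{s_j}) = \Cov(X_{s_i}, X_{s_j}) + m_i \overline{m_j}$ and noting that $m_i \overline{m_j} = \overline{\alpha_i}\alpha_j = (\alpha^* \alpha)_{ij}$, the requirement $\E(X_{s_i}\overline{X}_{s_j}) = K(s_i, s_j) = A_{ij}$ becomes $\Cov(X_{s_i}, X_{s_j}) = A_{ij} - (\alpha^* \alpha)_{ij} = C_{ij}$. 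Thus the process must have mean $m = \alpha^*$ and covariance matrix $C = A - \alpha^*\alpha$, and both have now been identified explicitly.

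To construct such a process, I would use that $C$, being Hermitian and positive definite, admits a factorization $C = B B^*$ (for instance via a square root or a Cholesky-type decomposition). Taking $\xi = (\xi_1, \dots, \xi_n)$ to be a vector of i.i.d. standard complex Gaussian variables, characterized by $\E(\xi_j) = 0$, $\E(\xi_i \overline{\xi}_j) = \delta_{ij}$ and $\E(\xi_i \xi_j) = 0$, I would define $X_{s_j} := m_j + (B\xi)_j$. A direct computation then yields $\E(X_{s_i} \overline{X}_{s_j}) = m_i \overline{m_j} + (BB^*)_{ij} = (\alpha^* \alpha)_{ij} + C_{ij} = A_{ij} = K(s_i, s_j)$ for $i, j \ge 1$, while the cases involving $s_0$ follow at once from $\E(X_{s_j}) = m_j = K(s_j, s_0)$ and from $K(s_0, s_0) = 1 = X_{s_0} \overline{X}_{s_0}$. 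This verifies \eqref{K-X} in every case.

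The only genuinely delicate point is the passage from the real to the complex Gaussian setting: since $K$ takes complex values, one must work with complex Gaussian vectors and know that a Hermitian positive definite matrix is realizable as the covariance of such a vector. The factorization $C = BB^*$ together with the standard complex Gaussian $\xi$ supplies exactly this, so no separate existence theorem is needed. The conceptual heart of the argument is the observation, made precise by Lemma \ref{lem_pos}, that the correlation of each $X_s$ with the deterministic anchor $X_{s_0} \equiv 1$ should be absorbed into the \emph{mean} of $X_s$; this converts the rank-one defect $\alpha^* \alpha$ into a shift of means and leaves behind the genuine positive definite covariance $C$.
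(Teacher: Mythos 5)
Your proposal is correct and follows essentially the same route as the paper: apply Lemma \ref{lem_pos} to conclude that $A-\alpha^*\alpha$ is positive definite, absorb the correlation with the deterministic anchor $X_{s_0}\equiv 1$ into the mean $\E(X_s)=K(s,s_0)$, and take a Gaussian process with covariance $A-\alpha^*\alpha$. The only difference is that you make the existence of that (complex) Gaussian process explicit via the factorization $C=BB^*$ and a standard complex Gaussian vector, a step the paper leaves as a standard fact.
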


\begin{proof}
Let $\alpha \in \C^{S\setminus \{s_0\}}$ be the row vector defined by $\alpha(t) = K(s_0, t)$ for all $t\in S\setminus \{s_0\}$ and let $A \in \C^{(S\setminus \{s_0\})\times (S\setminus \{s_0\})}$ be the square matrix defined by $A(s, t) = K(s, t)$ for all $s, t \in S\setminus \{s_0\}$. By Lemma \ref{lem_pos} and the assumption that $K$ is positive definite (and thus $K(s_0, s)^*  = K(s, s_0)$), the following matrix 
\[
A-\alpha^{*} \alpha = \Big[ 
\begin{array}{cc}
K(s, t) -  K(s, s_0) K(s_0, t)
\end{array}
\Big]_{s, t \in S\setminus \{s_0\}}
\]
is positive definite. As a consequence, there exists a Gaussian process $(X_s)_{s\in S\setminus \{s_0\}}$ such that 
\begin{align}\label{cov-str}
\left\{
\begin{array}{ll}
\Cov(X_s, X_t)  = K(s, t) - K(s, s_0) K(s_0, t), &  \forall s, t \in S\setminus \{s_0\}
\vspace{2mm}
\\
\E(X_s)= K(s, s_0), &  \forall s \in S\setminus \{s_0\}
\end{array}
\right..
\end{align}
Set $X_{s_0} \equiv 1$, then \eqref{K-X} is equivalent to \eqref{cov-str}. This completes the proof of the lemma. 
\end{proof}

\begin{proof}[Conclusion of the proof of Theorem \ref{thm_improve}]
 Without loss of generality, we may assume that both $\Sigma_1$ and  $\Sigma_2$ are finite sets. By Lemma \ref{lem_gauss}, we can construct two Gaussian processes $(X_{\sigma_i}^{(i)})_{\sigma_i \in \Sigma_i \setminus  \{x_0\}}$ for $i  =1, 2$ such that, by setting $X_{x_0}^{(1)} = X_{x_0}^{(2)} \equiv 1$, for any $i =1, 2$,  we have 
\[
K_i (\sigma_i, \sigma_i')  = \E\Big(X_{\sigma_i}^{(i)} \overline{X_{\sigma_i'}^{(i)}}\Big),  \quad \forall \sigma_i, \sigma_i' \in \Sigma_i\setminus \{x_0\}. 
\]
Clearly, we may assume that the two Gaussian processes $(X_{\sigma_1}^{(1)})_{\sigma_1 \in \Sigma_1 \setminus  \{x_0\}}$ and $(X_{\sigma_2}^{(2)})_{\sigma_2 \in \Sigma_2 \setminus  \{x_0\}}$ are independent. Therefore,  since $X_{x_0}^{(1)} = X_{x_0}^{(2)} \equiv 1$, the two families of random variables $(X_{\sigma_1}^{(1)})_{\sigma_1 \in \Sigma_1}$ and $(X_{\sigma_2}^{(2)})_{\sigma_2 \in \Sigma_2}$ are independent. Let $(Y_\sigma)_{\sigma \in \Sigma_1 \cup \Sigma_2}$ be the family of random variables defined by 
\[
Y_\sigma= \left\{ 
\begin{array}{cc}
X_\sigma^{(1)}  & \text{if $\sigma \in \Sigma_1$}
\\ 
X_\sigma^{(2)}  & \text{if $\sigma \in \Sigma_2$}
\end{array}
\right..
\]
Note that this family $(Y_\sigma)_{\sigma \in \Sigma_1 \cup \Sigma_2}$ is indeed well-defined since we set $X_\sigma^{(1)}   = X_\sigma^{(2)} \equiv 1$ for $\sigma \in \Sigma_1 \cap \Sigma_2 = \{x_0\}$. 

Now  by the definition of $(Y_\sigma)_{\sigma \in \Sigma_1 \cup \Sigma_2}$,  it is immediate to check directly that 
\begin{itemize}
\item If $\sigma, \sigma'\in \Sigma_i$, then $\E\big(Y_\sigma \overline{Y_{\sigma'}}\big)   = K_i(\sigma, \sigma')$. 
\item If $\sigma_1\in  \Sigma_1,  \sigma_2 \in \Sigma_2$, then by the independence between $X_{\sigma_1}^{(1)}$ and $X_{\sigma_2}^{(2)}$ and by  the  second equality in \eqref{cov-str} (applied to $\Sigma_1$ and $\Sigma_2$ respectively), we have 
\begin{align*}
\E\big(Y_{\sigma_1} \overline{Y_{\sigma_2}}\big) & = \E\big(X_{\sigma_1}^{(1)} \overline{X_{\sigma_2}^{(2)}}\big)   = \E\big(X_{\sigma_1}^{(1)} \big) \E \big( \overline{X_{\sigma_2}^{(2)}}\big) 
\\
=&  K_1 (\sigma_1, x_0) \overline{K_2(\sigma_2, x_0)} =  K_1 (\sigma_1, x_0) K_2(x_0, \sigma_2).
\end{align*}
Moreover,  $\E\big(Y_{\sigma_2} \overline{Y_{\sigma_1}} \big)= \overline{\E\big(Y_{\sigma_1} \overline{Y_{\sigma_2}}\big)}$.
\end{itemize}
That is, recalling  the definition of the Markov product $K_1*_{x_0} K_2$ of the kernel $K_1$ and $K_2$, we obtain  
\begin{align}\label{kernel-in-p}
[K_1*_{x_0} K_2 ] (\sigma, \tau) =  \E\big(Y_{\sigma} \overline{Y_{\tau}}\big), \quad \forall \sigma, \tau \in \Sigma_1 \cup \Sigma_2. 
\end{align}
The equality \eqref{kernel-in-p} implies clearly that the Markov product $K_1*_{x_0} K_2$ is positive definite and this completes the whole proof of Theorem \ref{thm_improve}. 
\end{proof}


\end{document}